\numberwithin{equation}{section}
\newcommand\ts[1]{{\textstyle{#1}}}
\theoremstyle{plain}
\newtheorem{Th}{Theorem}[section]
\newtheorem{Lemma}[Th]{Lemma}
\newtheorem{Prop}[Th]{Proposition}
\theoremstyle{definition}
\newtheorem{Rem}[Th]{Remark}
\newtheorem{?}[Th]{Problem}
\newcommand{\Span}{\operatorname{span}}
\newcommand{\N}{\mathbb{N}}
\newcommand{\cD}{\mathcal{D}}
\newcommand{\cM}{\mathcal{M}}
\newcommand{\cO}{\mathcal{O}}
\begin{document}


\title{Decay of the Kolmogorov $N$-width for wave problems}
\author{Constantin Greif and Karsten Urban}
\address{Ulm University, Institute of Numerical Mathematics, Helmholtzstr.\ 20, D-89081 Ulm, Germany} 
\email{\{constantin.greif,karsten.urban\}@uni-ulm.de}

\begin{abstract} 
The Kolmogorov $N$-width $d_N(\mathcal{M})$ describes the rate of the worst-case error (w.r.t.\ a subset $\cM\subset H$ of a normed space $H$) arising from a projection onto the best-possible linear subspace of $H$ of dimension $N\in\N$. Thus, $d_N(\mathcal{M})$ sets a limit to any projection-based approximation such as determined by the reduced basis method. While it is known that $d_N(\mathcal{M})$ decays exponentially fast for many linear coercive parametrized partial differential equations, i.e., $d_N(\mathcal{M})=\cO(e^{-\beta N})$,  we show in this note, that only $d_N(\mathcal{M}) =\cO(N^{-1/2})$ for initial-boundary-value problems of the hyperbolic wave equation with discontinuous initial conditions. This is aligned with the known slow decay of $d_N(\cM)$ for the linear transport problem.
\end{abstract}


\keywords{Kolmogorov $N$-width, wave equation}
\subjclass[2010]{41A46, 65D15}
\maketitle

\section{Introduction}\label{Sec:1}
The Kolmogorov $N$-width is a classical concept of (nonlinear) approximation theory as it describes the error arising from a projection onto the best-possible space of a given dimension $N\in\N$, \cite{MR774404}. This error is measured for a class $\cM$ of objects in the sense that the \emph{worst} error over $\cM$ is considered. Here, we focus on subsets $\cM\subset H$, where $H$ is some Banach or Hilbert space with norm $\| \cdot \|_{H}$. Then, the Kolmogorov $N$-width is defined as
\begin{align} \label{defKolNwidth}
    d_N(\cM) := \inf\limits_{V_N\subset H;\ \dim V_N=N} 
    		\sup\limits_{ u \in \mathcal{M} } 
			\inf\limits_{v_N \in V_N }  \| u - v_N \|_{H},
\end{align}
where $V_N$ are linear subspaces. The corresponding approximation scheme is nonlinear as one is looking for the best possible linear space of dimension $N$. Due to the infimum, the decay of $d_N(\cM)$ as $N\to\infty$ sets a lower bound for the best possible approximation of all elements in $\cM$ by a linear approximation in $V_N$.

Particular interest arises if the set $\cM$ is chosen as a set of solutions of certain equations such as partial differential equations (PDEs), which is the reason why sometimes (even though slightly misleading) $\cM$ is termed as `solution manifold'. In that setting, one considers a \emph{parameterized} PDE (PPDE) with a suitable solution $u_\mu$ and $\mu$ ranges over some parameter set $\cD$, i.e., $\cM\equiv\cM(\cD):=\{ u_\mu:\, \mu\in\cD\}$, where we will skip the dependence on $\cD$ for notational convenience. As a consequence, the decay of the Kolmogorov $N$-width is of particular interest for model reduction in terms of the reduced basis method. There, given a PPDE and a parameter set $\cD$, one wishes to construct a possibly optimal linear subspace $V_N$ in an offline phase in order to highly efficiently compute a reduced approximation with $N$ degrees of freedom (in $V_N$) in an online phase. For more details on the reduced basis method, we refer the reader e.g.\ to the recent surveys \cite{Haasdonk:RB,RozzaRB,QuarteroniRB}.

It has been proven that for certain linear, coercive parameterized problems, the Kolmogorov $N$-width decays exponentially fast, i.e., 
\begin{align*}
    d_N( \mathcal{M} ) \leq C e^{-\beta N}
\end{align*}
with some constants $C<\infty$ and $\beta>0$, see e.g.\ \cite{MR2877366,OR16}. This extremely fast decay is at the heart of any model reduction strategy (based upon a projection to $V_N$) since it allows us to chose a very moderate $N$ to achieve small approximation errors. It is worth mentioning that this rate can in fact be achieved numerically by determining $V_N$ by a greedy-type algorithm.

However, the situation dramatically changes when leaving the elliptic and parabolic realm. In fact, it has been proven in \cite{OR16} that $d_N$ decays for certain first-order linear transport problems at most with the rate $N^{-1/2}$. This in turn implies that projection-based approximation schemes for transport problems severely lack efficiency, \cite{MR3911721,MR3177860}. In this note, we consider hyperbolic problems and show in a similar way as in \cite{OR16} that 
\begin{align*}
     d_N(\mathcal{M}) \geq \ts{\frac{1}{4}}\, N^{-1/2},
\end{align*}
(see Thm.\ \ref{maintheoremabschatzung} below) for an example of the second-order wave equation. In Section \ref{Sec:2}, we describe the Cauchy problem of a second-order wave equation with discontinuous initial conditions and review the distributional solution concept. Section \ref{Sec:3} is devoted to the investigation of a corresponding initial-boundary-value problem and Section \ref{Sec:4} contains the proof of Thm.\ \ref{maintheoremabschatzung}.

\section{Distributional solution of the wave equation on $\mathbb{R}$}\label{Sec:2}

We start by considing the univariate wave equation on the spatial domain $\Omega := \mathbb{R}$ and on the time interval $I := \mathbb{R}^+$ (i.e., a Cauchy problem) for a real-valued parameter $\mu \geq 0$ with discontinuous initial values, i.e.,
\begin{subequations} 
	\label{simple_waveequation}
	\begin{align} 
     	 \partial_{tt} u_{\mu}(t,x)  - \mu^2 \, \partial_{xx} u_{\mu}(t,x) 
      	&= 0 \quad \text{for} \quad (t,x) \in \Omega_I := I \times \Omega, \label{simple_wave_wobound}  \\
 	u_{\mu}(0,x) 
		&= u_0(x) :=  \begin{cases}
    			1, & \text{if $x<0$}, \\
    			-1, & \text{if $x\geq 0$},
 		 \end{cases}  \quad  x \in \Omega, 
		 \label{simple_wave_ini1} \\ 
  	\partial_{t} u_{\mu}(0,x) 
	&= 0, \quad  x \in \Omega. \label{simple_wave_ini2}
\end{align}
  \end{subequations}
This initial value problem has no classical solution, so that we consider a weak solution concept, namely we look for solutions in the distributional sense, which is known to be appropriate for hyperbolic problems.

\begin{Lemma} \label{lemma_dis_sol_wave}
A distributional solution of \eqref{simple_waveequation} is given, for $(t,x) \in \Omega_I = \mathbb{R}^+ \times \mathbb{R}$, by\\
\begin{minipage}{0.65\textwidth}
 \begin{align*}
    u_{\mu}(t,x) = \begin{cases}
    1, & \text{if $x < - \mu t $}, \\
     -1, & \text{if $x \geq \mu t $}, \\
     0, & \text{else}.
  \end{cases}
 \end{align*}
\end{minipage}
\begin{minipage}{0.3\textwidth}
  	 	\begin{tikzpicture}[x=10mm,y=10mm]
		\draw[->] (-1.3,0) -- (1.3,0) node[right] {\footnotesize$x$};
  		\draw[->] (0,0) -- (0,1.3) node[above] {\footnotesize$t$};
		\fill[opacity=0.5, black!20!white] (-1.2,0) -- (0,0) -- (-1.2,1.2) -- (-1.2,0);
		\fill[opacity=0.5, black!40!white] (0,0) -- (-1.2,1.2) -- (1.2,1.2) -- (0,0);
		\fill[opacity=0.5, black!30!white] (1.2,0) -- (0,0) -- (1.2,1.2) -- (1.2,0);
		\node at (-0.75,0.4) {$1$};
		\node at (0.8,0.4) {$-1$};
		\node at (0.2,0.7) {$0$};
		\draw (0,0) -- (-1.2,1.2) node [pos=0.9,above] {\tiny$t\!=\!-\frac{x}{\mu}$};
		\draw (0,0) -- (1.2,1.2) node [pos=0.9,above] {\tiny$t\!=\!\frac{x}{\mu}$};
		\end{tikzpicture}
\end{minipage}
\end{Lemma}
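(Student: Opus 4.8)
The plan is to identify the proposed $u_\mu$ as the d'Alembert solution of the one-dimensional wave equation with wave speed $\mu$ and vanishing initial velocity, and then to verify the three defining distributional relations (the equation and the two initial conditions) from this closed form. Writing the Heaviside function $H$ with $H(s)=1$ for $s\geq 0$ and $H(s)=0$ for $s<0$, the initial datum is $u_0=1-2H$. Since the initial velocity \eqref{simple_wave_ini2} vanishes, d'Alembert's formula reduces to $u_\mu(t,x)=\tfrac12\big[u_0(x-\mu t)+u_0(x+\mu t)\big]$, and a short case distinction over the three regions $x<-\mu t$, $-\mu t\le x<\mu t$, $x\ge \mu t$ shows that this equals the claimed piecewise-constant function, i.e.
\[
  u_\mu(t,x)=1-H(x-\mu t)-H(x+\mu t).
\]
Thus $u_\mu$ is the superposition of a right- and a left-travelling step, which is exactly the structure I will exploit.

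For the interior equation I would use the factorisation of the wave operator into commuting transport operators,
\[
  \partial_{tt}-\mu^2\partial_{xx}=(\partial_t-\mu\partial_x)(\partial_t+\mu\partial_x)=(\partial_t+\mu\partial_x)(\partial_t-\mu\partial_x).
\]
Because $H(x-\mu t)$ depends only on the characteristic variable $x-\mu t$, it is annihilated already by $\partial_t+\mu\partial_x$; symmetrically, $H(x+\mu t)$ is annihilated by $\partial_t-\mu\partial_x$. Each of these first-order identities is a distributional statement that I would confirm by a one-line integration by parts against a test function $\varphi\in C_c^\infty((0,\infty)\times\mathbb{R})$, using $H'=\delta$. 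Applying the remaining (commuting) factor of the wave operator to each term in the order that annihilates it, and noting that the constant $1$ is killed by any second derivative, then yields $(\partial_{tt}-\mu^2\partial_{xx})u_\mu=0$ in $\mathcal{D}'((0,\infty)\times\mathbb{R})$.

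It remains to match the initial data. The condition \eqref{simple_wave_ini1} on the initial value is immediate from the closed form, since $u_\mu(0,x)=1-H(x)-H(x)=1-2H(x)=u_0(x)$. For the velocity, differentiating termwise gives $\partial_t u_\mu=\mu\,\delta(x-\mu t)-\mu\,\delta(x+\mu t)$, whose trace at $t=0$ is $\mu\delta(x)-\mu\delta(x)=0$, as required by \eqref{simple_wave_ini2}. The step demanding the most care --- and the main obstacle --- is making rigorous sense of these traces: since $u_\mu$ is merely a (discontinuous) distribution, the initial conditions cannot be read off pointwise but must be phrased weakly, either as limits $t\downarrow 0$ in $\mathcal{D}'(\mathbb{R})$ or, more cleanly, by folding them into a single distributional formulation on $[0,\infty)\times\mathbb{R}$ in which the test functions need not vanish at $t=0$ and the resulting boundary terms along $\{t=0\}$ reproduce $u_0$ together with the vanishing velocity. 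Once this weak formulation is fixed, the factorisation computation for the interior and the termwise trace evaluation above together furnish all the identities needed, completing the verification.
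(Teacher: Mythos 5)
Your proposal is correct, but it takes a genuinely different route from the paper. The paper is \emph{constructive}: it first builds the fundamental solution $G_{\mu}(t,x) = \frac{1}{2\mu}\big(H(x+\mu t) - H(x-\mu t)\big)$ of the auxiliary problem \eqref{funwaveeq} with data $(0,\delta)$, verifies it distributionally, and then invokes the standard (cited) representation formula $u_{\mu}(t,x) = \int_{\mathbb{R}} \partial_t G_{\mu}(t,x-y)\,u_0(y)\,dy + \int_{\mathbb{R}} G_{\mu}(t,x-y)\,\partial_t u_{\mu}(0,y)\,dy$ to \emph{derive} the d'Alembert form $\frac12\big[u_0(x+\mu t)+u_0(x-\mu t)\big]$ and hence the piecewise-constant solution. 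You instead start from that same d'Alembert form, rewrite it as $u_{\mu} = 1 - H(x-\mu t) - H(x+\mu t)$, and \emph{verify} the claim directly: the interior equation follows from the factorisation $\partial_{tt}-\mu^2\partial_{xx} = (\partial_t-\mu\partial_x)(\partial_t+\mu\partial_x)$ together with the fact that each transport factor annihilates the corresponding travelling step (a one-line integration by parts, using that mixed distributional derivatives commute), and the initial data are checked termwise. What each approach buys: the paper's argument explains where the formula comes from, but treats the representation formula as a black box from the literature and formally divides by $\mu$, so it does not literally cover $\mu=0$; your argument is self-contained and elementary --- the only analytic input is $H'=\delta$ --- works uniformly down to $\mu=0$, but presupposes the answer. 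You are also more candid than the paper about the one genuine subtlety, namely that the initial conditions of a distributional solution must be read as weak traces (limits in $\mathcal{D}'(\mathbb{R})$ as $t\downarrow 0$, or boundary terms in a space-time weak formulation); the paper evaluates $G_{\mu}$ and $\partial_t G_{\mu}$ at $t=0$ with exactly the informality you flag, and since $u_{\mu}(t,\cdot)$ and $\partial_t u_{\mu}(t,\cdot) = \mu\,\delta(\cdot-\mu t)-\mu\,\delta(\cdot+\mu t)$ converge in $\mathcal{D}'(\mathbb{R})$ as $t\downarrow 0$ to $u_0$ and $0$ respectively, your deferred trace verification does go through.
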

\begin{proof}
We start by considering the following initial value problem 
\begin{align} \label{funwaveeq} 
\begin{split}
      \partial_{tt} G_{\mu}(t,x)  - \mu^2 \cdot \partial_{xx} G_{\mu}(t,x) = 0 \quad  \text{for} \quad (t,x) \in \Omega_I ,  \\
 G_{\mu}(0,x) = 0, \quad \partial_{t} G_{\mu}(0,x) = \delta(x ),  \quad x \in \Omega,
\end{split}
\end{align}
where $\delta(\cdot)$ denotes Dirac's $\delta$-distribution at 0. A solution $G_{\mu}$ of \eqref{funwaveeq} is called \emph{fundamental solution} (see e.g. \cite[Ch.\ 5]{MR2028503}) and can easily be seen to read $G_{\mu}(t,x) = \frac{1}{2 \mu} \big(H(x+ \mu t) - H(x- \mu t)\big)$, 
where $H(x):= \int^{x }_{-\infty} \delta(y) dy $ denotes the Heaviside step function with distributional derivative $H' = \delta$. Hence, the distributional derivative of $G_{\mu}$ w.r.t.\ $t$ reads 
\begin{align} \label{Ablfundsol}
   \partial_t G_{\mu}(t,x) = \frac12 \big(\delta(x+\mu t) + \delta(x-\mu t)\big)
\end{align}
and it is obvious that $G_{\mu}(0,x) = 0$ as well as $\partial_{t} G_{\mu}(0,x) = \delta(x )$ for $x \in \mathbb{R}$. By using the properties of the Dirac's $\delta$-distribution (see e.g.\ \cite{MR1275833}) we observe that $\partial_{tt} G_{\mu}(t,x) = \frac{\mu}{2} \big( \delta(x+\mu t) - \delta(x-\mu t)\big)$ and $\partial_{xx} G_{\mu}(t,x) = \frac{1}{2 \mu} \big(\delta(x+\mu t) - \delta(x-\mu t) \big)$ in the distributional sense. Hence, $G_{\mu}$ satisfies \eqref{funwaveeq}. 

Now, we consider the original problem \eqref{simple_waveequation}. To this end, the following relation of the fundamental solution $G_{\mu}$ of \eqref{funwaveeq} and the solution $u_{\mu}$ of \eqref{simple_waveequation} is well-known \cite{MR2028503},
\begin{align*}
    u_{\mu} (t,x) = 
    	& \int _{\mathbb{R}} \partial_t G_{\mu}(t,x-y) u_{\mu}(0,y) dy 
    		+ \int _{\mathbb{R}} G_{\mu}(t,x-y) \partial_t u_{\mu}(0,y) dy. 
\end{align*}
Finally, inserting $\partial_t  G_{\mu}$ from \eqref{Ablfundsol}, the initial condition $u_{\mu}(0,\cdot) = u_0(\cdot)$ in $\mathbb{R}$, and the Neumann initial condition $ \partial_t  u_{\mu}(0,\cdot) = 0$ in $\mathbb{R}$, yields
\begin{align*}
    u_{\mu} (t,x) 
    	&= \ts{\frac12} \int _{\mathbb{R}} \big(  \delta(x-y+\mu t) + \delta(x-y-\mu t)\big ) u_0(y)\, dy    \\
   	&= \ts{\frac{1}{2}} \Big[  u_{0}(x+\mu t ) + u_{0}(x - \mu t ) \Big] = \begin{cases}
    		1, & \text{if $x < - \mu t $}, \\
     		-1, & \text{if $x \geq \mu t $}, \\
     		0, & \text{else},
  \end{cases} 
\end{align*}
which proves the claim.
\end{proof}

\section{The wave equation on the interval}\label{Sec:3}

Let us consider the wave equation \eqref{simple_wave_wobound}, but now on the bounded space-time domain $\Omega_I := (0,1) \times (-1,1)$ with Dirichlet boundary conditions
\begin{align} \tag{\ref{simple_waveequation}d} \label{21drandhi}
     u_{\mu}(t,-1) = 1,\quad 
     u_{\mu}(t,1) = -1,
  	  \qquad \text{for} \quad t \in I:= (0,1),
\end{align}
and the initial conditions (\ref{simple_wave_ini1},\ref{simple_wave_ini2}). It is readily seen that the  functions $ \varphi_{\mu}$ defined by\\
\begin{minipage}{0.65\textwidth}
\begin{align} \label{phiss}
    \varphi_{\mu}(t,x) := \begin{cases}
    1, & \text{if $x < - \mu t $}, \\
     -1, & \text{if $x \geq \mu t $}, \\
     0, & \text{else},
  \end{cases}
\end{align}
\end{minipage}
\begin{minipage}{0.3\textwidth}
  	 	\begin{tikzpicture}[x=10mm,y=10mm]
		\draw[->] (-1.1,0) -- (1.1,0) node[right] {\footnotesize$x$};
  		\draw[->] (0,0) -- (0,1.2) node[above] {\footnotesize$t$};
		\fill[opacity=0.5, black!20!white] (-1.0,0) -- (0,0) -- (-1.0,1.0) -- (-1.0,0);
		\fill[opacity=0.5, black!40!white] (0,0) -- (-1.0,1.0) -- (1.0,1.0) -- (0,0);
		\fill[opacity=0.5, black!30!white] (1.0,0) -- (0,0) -- (1.0,1.0) -- (1.0,0);
		\node at (-0.75,0.4) {$1$};
		\node at (0.75,0.4) {$-1$};
		\node at (0.2,0.7) {$0$};
		\draw (0,0) -- (-1.0,1.0) node [pos=0.9,above] {\tiny$t\!=\!-\frac{x}{\mu}$};
		\draw (0,0) -- (1.0,1.0) node [pos=0.9,above] {\tiny$t\!=\!\frac{x}{\mu}$};
		\draw[thick] (-1,0) -- (-1,1) -- (1,1) -- (1,0);
		\draw (-1,0.1) -- (-1,-0.1) node [below] {\tiny$-1$};
		\draw (1,0.1) -- (1,-0.1) node [below] {\tiny$1$};
		\draw (0.1,1) -- (-0.1,1) node [above] {\tiny$1$};
		\end{tikzpicture}
\end{minipage}
\newline
for $(t,x) \in \overline{\Omega}_I = [0,1] \times [-1,1]$ are contained in the solution manifold of (\ref{simple_waveequation}a-d), i.e., 
\begin{align} \label{solmanifoldM}
    \{ \varphi_{\mu}: \mu \in \mathcal{D} \} 
    \subset \mathcal{M} \equiv \mathcal{M}(\cD)
    := \{ u_{\mu}:  \mu \in \mathcal{D} := [0,1] \} \subset L_2 (\Omega_I). 
\end{align}
In fact, by Lemma \ref{lemma_dis_sol_wave}, $\varphi_{\mu}$ solves (\ref{simple_waveequation}a-c) on $\mathbb{R}^+ \times \mathbb{R}$  and they also satisfy the boundary conditions \eqref{21drandhi}.  
The next step is the consideration of a specific family of functions to be defined now. For some $M \in \mathbb{N}$ and $1 \leq m \leq M$, let 
 \begin{align} \label{psisortho}
    \psi_{M ,m}(t,x) := \begin{cases}
    1, & \text{if $x \in \big[- \frac{m }{M  } t, -\frac{m-1}{M } t \big) $}, \\
    -1, & \text{if $x \in \big[\frac{m-1}{M  } t, \frac{m }{M } t \big)  $},  \\
     0, & \text{else},
  \end{cases} 
  \quad \text{for} \quad (t,x) \in \bar{\Omega}_I ,
\end{align}
and we collect all $\psi_{M ,m}$, $m=1,\ldots,M$ in 
\begin{align} \label{Psifncset}
    \Psi_{M} := \{   \psi_{M,m}  :\,  1 \leq m \leq M \}.
\end{align}
Note, that $\Psi_{M}$ can be generated by
\begin{align} \label{Phifncset}
     \Phi_{M} :=\{  \varphi_{\frac{m}{M}}  :\,  0 \leq m \leq M  \} \subset \{ \varphi_{\mu}:\, \mu \in \mathcal{D} \} ,
\end{align}
as follows 
$\psi_{M ,m} = \varphi_{\frac{m-1}{M}} -  \varphi_{\frac{m}{M}}$, $1 \leq m \leq M$, 
which in fact can be easily seen; see also Figure~\ref{fig:test}. 
%
\begin{figure}[!htb]
  	 \begin{center}
	 	\begin{tikzpicture}
		\draw[->] (-1.2,0) -- (1.2,0) node[right] {\footnotesize$x$};
  		\draw[->] (0,-1.2) -- (0,1.2) node[above] {\footnotesize$y$};
		\draw[very thick] (-1,1) -- (0,1) -- (0,-1) -- (1,-1);
		\node at (0.7,1) {$\varphi_0$};
		\end{tikzpicture}
			\hspace*{5mm}
	 	\begin{tikzpicture}
		\draw[->] (-1.2,0) -- (1.2,0) node[right] {\footnotesize$x$};
  		\draw[->] (0,-1.2) -- (0,1.2) node[above] {\footnotesize$y$};
		\draw[very thick] (-1,1) -- (-0.16,1) -- (-0.16,0) -- (0.16,0) -- (0.16,-1) -- (1,-1);
		\node at (0.7,1) {$\varphi_{\frac13}$};
		\end{tikzpicture}
			\hspace*{5mm}
	 	\begin{tikzpicture}
		\draw[->] (-1.2,0) -- (1.2,0) node[right] {\footnotesize$x$};
  		\draw[->] (0,-1.2) -- (0,1.2) node[above] {\footnotesize$y$};
		\draw[very thick] (-1,1) -- (-0.33,1) -- (-0.33,0) -- (0.33,0) -- (0.33,-1) -- (1,-1);
		\node at (0.7,1) {$\varphi_{\frac23}$};
		\end{tikzpicture}
			\hspace*{5mm}
	 	\begin{tikzpicture}
		\draw[->] (-1.2,0) -- (1.2,0) node[right] {\footnotesize$x$};
  		\draw[->] (0,-1.2) -- (0,1.2) node[above] {\footnotesize$y$};
		\draw[very thick] (-1,1) -- (-0.5,1) -- (-0.5,0) -- (0.5,0) -- (0.5,-1) -- (1,-1);
		\node at (0.7,1) {$\varphi_1$};
		\end{tikzpicture}

	 	\begin{tikzpicture}
		\draw[->] (-1.2,0) -- (1.2,0) node[right] {\footnotesize$x$};
  		\draw[->] (0,-1.2) -- (0,1.2) node[above] {\footnotesize$y$};
		\draw[very thick] (-1,0) -- (-0.16,0) -- (-0.16,1) -- (0,1) -- (0,-1) -- (0.16,-1) -- (0.16,0) -- (1,0);
		\node at (0.7,1) {$\psi_{3,1}$};
		\end{tikzpicture}
			\hspace*{5mm}
	 	\begin{tikzpicture}
		\draw[->] (-1.2,0) -- (1.2,0) node[right] {\footnotesize$x$};
  		\draw[->] (0,-1.2) -- (0,1.2) node[above] {\footnotesize$y$};
		\draw[very thick] (-1,0) -- (-0.33,0) -- (-0.33,1) -- (-0.16,1) -- (-0.16,0)
				-- (0.16,0) -- (0.16,-1) -- (0.33,-1) -- (0.33,0) -- (1,0);
		\node at (0.7,1) {$\psi_{3,2}$};
		\end{tikzpicture}
			\hspace*{5mm}
	 	\begin{tikzpicture}
		\draw[->] (-1.2,0) -- (1.2,0) node[right] {\footnotesize$x$};
  		\draw[->] (0,-1.2) -- (0,1.2) node[above] {\footnotesize$y$};
		\draw[very thick] (-1,0) -- (-0.5,0) -- (-0.5,1) -- (-0.33,1) -- (-0.33,0)
				-- (0.33,0) -- (0.33,-1) -- (0.5,-1) -- (0.5,0) -- (1,0);
		\node at (0.7,1) {$\psi_{3,3}$};
		\end{tikzpicture}
  \caption{Top: functions $\varphi_\mu$ for $\mu=0,\frac13,\frac23,1$. Bottom: functions $\psi_{M,m}$ for $M=3$ and $m=1,2,3$. All for $t=\frac12$ fixed on $[-1,1]$.
  \label{fig:test}}
	\end{center}
\end{figure}
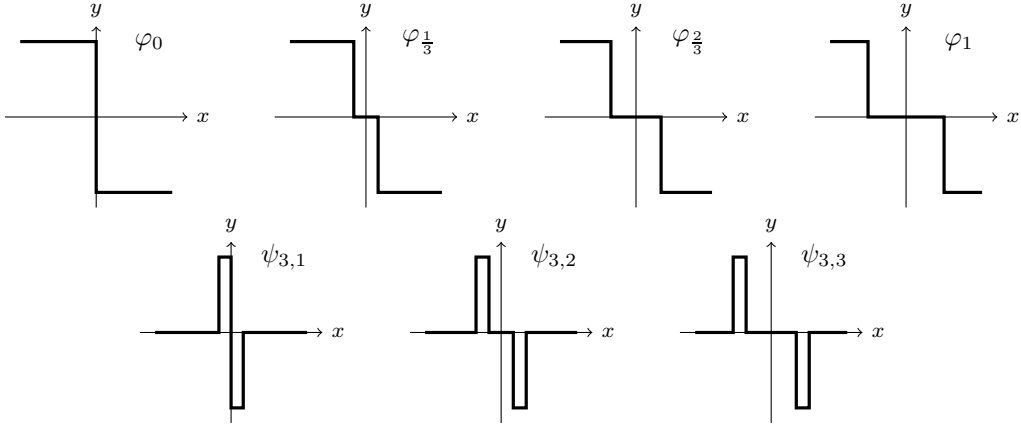
%
We will see later that $d_N(\Phi_M)\ge \frac12 d_N(\Psi_M)$. Moreover $\| \psi_{M,m} \|_{ L_2 (\Omega_I) } =  \sqrt{1/M} $ and these functions are pairwise orthogonal,  i.e.
\begin{align*}
 \big( \psi_{M,m_1}, \psi_{M,m_2} \big)_{L_2 (\Omega_I)} 
 	= \int\limits_{0}^1 \int\limits_{-1}^1 \psi_{M,m_1}(t,x) \ \psi_{M,m_2}(t,x) \ dx \ dt 
	= \ts{\frac{1}{M}}\,  \delta_{m_1, m_2},
\end{align*}
where $\delta_{m_1, m_2}$ denotes the Kronecker-$\delta$ for $m_1, m_2 \in \{ 1,\dots,M \}$. Thus,
\begin{align} \label{orthonormalf}
	\tilde{\Psi}_{M}:= \{   \tilde{\psi}_{M,m}  :\,  1 \leq m \leq M \}, 
	\qquad
  	\tilde{\psi}_{M,m} := \ts{\sqrt{M}}\, \psi_{M ,m}, 1 \leq m \leq M,
\end{align}
is a set of orthonormal functions.

\section{Kolmogorov $N$-width of sets of orthonormal elements}\label{Sec:4}
Let us start by introducing the notation $\mathcal{V}_N  := \{ V_N \subset H:\,  \text{linear space with } \text{dim}(V_N) = N  \}$, so that the Kolmogorov $N$-width in \eqref{defKolNwidth} can be rephrased as
\begin{align*} 
    d_N(\mathcal{M}) := \inf\limits_{V_N \in \mathcal{V}_N } \sup\limits_{ u \in \mathcal{M} } 
    	\inf\limits_{v_N \in V_N }  \| u - v_N \|_{H}.
\end{align*}
We are going to determine either the exact value or lower bounds of $d_N(\mathcal{M})$ for certain sets of functions.

\begin{Lemma} \label{lemmakolNeinheie}
The canonical orthonormal basis $\{ e_1, \dots, e_{2N} \}$ of $H := ( \mathbb{R}^{2N}, \| \cdot \|_2 )$ has the Kolmogorov $N$-width $d_N ( \{ e_1, \dots, e_{2N} \} )  =  \frac{1}{\sqrt{2}}$.
\end{Lemma}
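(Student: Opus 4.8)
The plan is to prove a two-sided estimate, showing that $d_N$ equals exactly $\tfrac{1}{\sqrt2}$ by establishing $d_N \le \tfrac{1}{\sqrt2}$ via an explicit choice of subspace and $d_N \ge \tfrac{1}{\sqrt2}$ via a lower bound argument. For the upper bound, I would exhibit a single $N$-dimensional subspace $V_N$ that approximates every $e_j$ to within $\tfrac{1}{\sqrt2}$. A natural symmetric candidate is $V_N := \Span\{ e_1 + e_{N+1},\, e_2 + e_{N+2},\, \dots,\, e_N + e_{2N}\}$, pairing the first $N$ basis vectors with the last $N$. For any $e_j$, its best approximation in $V_N$ is the orthogonal projection; since $e_j$ lies in exactly one of these pairs and is orthogonal to the others, the projection onto the relevant direction $\tfrac{1}{\sqrt2}(e_i+e_{N+i})$ has squared length $\tfrac12$, so the residual has norm $\sqrt{1-\tfrac12} = \tfrac{1}{\sqrt2}$. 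Taking the supremum over $j$ gives $d_N \le \tfrac{1}{\sqrt2}$.

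For the lower bound I would argue that no $N$-dimensional subspace can do better. Let $V_N$ be arbitrary with $\dim V_N = N$, and let $P$ denote orthogonal projection onto $V_N$ (the best approximation in a Hilbert space is the orthogonal projection, so $\inf_{v\in V_N}\|e_j - v\|_2 = \|e_j - Pe_j\|_2$). The key identity is that $\|e_j - Pe_j\|_2^2 = 1 - \|Pe_j\|_2^2$, and summing over all $2N$ basis vectors gives $\sum_{j=1}^{2N} \|e_j - Pe_j\|_2^2 = 2N - \sum_{j=1}^{2N}\|Pe_j\|_2^2$. Now $\sum_{j=1}^{2N}\|Pe_j\|_2^2 = \operatorname{trace}(P) = \dim V_N = N$, because $P$ is an orthogonal projection of rank $N$ and the trace of such a projection equals its rank. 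Hence the average squared residual is $\tfrac{1}{2N}\sum_{j}\|e_j-Pe_j\|_2^2 = \tfrac{1}{2N}(2N - N) = \tfrac12$, so the maximum squared residual is at least $\tfrac12$, giving $\sup_j \|e_j - Pe_j\|_2 \ge \tfrac{1}{\sqrt2}$. Since $V_N$ was arbitrary, $d_N \ge \tfrac{1}{\sqrt2}$.

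The two bounds combine to yield $d_N(\{e_1,\dots,e_{2N}\}) = \tfrac{1}{\sqrt2}$. The main technical point, and the step I would be most careful about, is the trace identity $\sum_{j=1}^{2N}\|Pe_j\|_2^2 = N$: this is where the constraint $\dim V_N = N$ enters decisively, and it is what forces the bound to be $\tfrac{1}{\sqrt2}$ rather than something smaller. I would justify it cleanly by writing $\|Pe_j\|_2^2 = \langle Pe_j, e_j\rangle = (P)_{jj}$ (using $P = P^2 = P^\top$), so that the sum is $\operatorname{trace}(P)$, and then invoking that an orthogonal projection has trace equal to its rank. Everything else is routine Hilbert-space geometry, so the crux is recognizing this averaging-plus-trace mechanism, which immediately localizes the worst case and matches the explicit upper-bound construction.
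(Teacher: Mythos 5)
Your proof is correct and is essentially the paper's own argument: your trace identity $\sum_{j}\|Pe_j\|_2^2=\operatorname{trace}(P)=N$ is exactly the paper's coordinate computation $\sum_{k}\sum_{j}(d_j)_k^2=\sum_j\|d_j\|_2^2=N$, and your averaging step is the same pigeonhole reasoning the paper phrases as a proof by contradiction. The upper-bound construction also matches, differing only in pairing $e_j$ with $e_{N+j}$ instead of $e_{2j-1}$ with $e_{2j}$.
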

\begin{proof}
Let $V_N = \{ v = \sum_{j=1}^N a_j d_j \ \vert \ a_1, \dots, a_N \in \mathbb{R} \} \in \mathcal{V}_N$, with $\{d_1, \dots, d_{N}\}$ being an arbitrary set of orthonormal vectors in $H$. Thus, $V_N$ is an arbitrary linear subspace of $H$ of dimension $N$. Then, for any $k \in \{1, \dots, 2N \} $ and the canonical basis vector $e_k \in \mathbb{R}^{2N}$, we get 
\begin{align*}
    \sigma_{V_N}(k)^2 
    	:= \inf_{v \in V_N} \| e_k - v \|_2^2 = \| e_k - P_{V_N}(e_k) \|_2^2  
	= \Big\| e_k - \sum_{j=1}^N ( d_j)_k d_j  \Big\|_2^2,
\end{align*}
where $P_{V_N}(e_k) = \sum_{j=1}^N \langle e_k,d_j \rangle d_j = \sum_{j=1}^N ( d_j)_k d_j $ is the orthogonal projection of $e_k$ onto $V_N$. Then, 
\begin{align*}
    \|  P_{V_N}(e_k) \|_2^2 
    	= \Big\langle \sum_{j=1}^N ( d_j)_k d_j, \sum_{l=1}^N ( d_l)_k d_l \Big\rangle 
	= \sum_{j=1}^N ( d_j)_k \Big\langle d_j, \sum_{l=1}^N ( d_l)_k d_l \Big\rangle   
	= \sum_{j=1}^N ( d_j)_k^2.
\end{align*}
Next, for $k \in \{1, \dots, 2N \} $ we get\footnote{We also refer to \cite{MR0109826,MR1971217}, where it was proven that $\|P \| = \| I-P \| $ for any idempotent operator $P\ne0$, i.e., \eqref{sumorthoproof}.}
\begin{align}\label{sumorthoproof}
    \sigma_{V_N}(k)^2 
    	&= \| e_k -  P_{V_N}(e_k) \|_2^2 
	= \|   P_{V_N}(e_k) \|_2^2  -  ( P_{V_N}(e_k))_k^2 + \big(1- ( P_{V_N}(e_k))_k \big)^2 
		\nonumber \\
    	&= \sum_{j=1}^N ( d_j)_k^2 - \Big(\sum_{j=1}^N ( d_j)_k^2 \Big)^2 
		+ 1 - 2 \sum_{j=1}^N ( d_j)_k^2 + \Big(\sum_{j=1}^N ( d_j)_k^2 \Big)^2 
	= 1 - \sum_{j=1}^N ( d_j)_k^2. 
\end{align}
Let us now assume that 
\begin{align} \label{contramustwrong}
    \sum_{j=1}^N ( d_j)_k^2 > \frac{1}{2} \quad \text{for all} \quad k \in \{1, \dots, 2N \}.
\end{align}
Then, we would have that
\begin{align*}
    N = \sum_{j=1}^N \| d_j \|_2^2 = \sum_{j=1}^N \sum_{k=1}^{2N}  ( d_j)_k^2  = \sum_{k=1}^{2N} \sum_{j=1}^N ( d_j)_k^2 > 2N \cdot \ts{\frac{1}{2}} = N,
\end{align*}
which is a contradiction, so that \eqref{contramustwrong} must be wrong and we conclude that there exists a $k^* \in \{1, \dots, 2N \}$ such that $ \sum_{j=1}^N ( d_j)_{k^*}^2 \leq \ts{\frac{1}{2}}$. This yields by \eqref{sumorthoproof} that $\sigma_{V_N}(k^*)^2 = 1 - \sum_{j=1}^N ( d_j)_{k^*}^2 \geq \ts{\frac{1}{2}}$. By using this $k^*$, this leads us to
\begin{align*}
  	d_N ( \{ e_1, \dots, e_{2N} \} ) 
  	=  \inf_{ V_N \in \mathcal{V}_N } \sup_{ k \in \{1, \dots, 2N \} } \inf_{v \in V_N} \| e_k - v \|_{2} 
	\geq  \inf_{ V_N \in \mathcal{V}_N} \sigma_{V_N}(k^*)  
	\geq \ts{\frac{1}{\sqrt{2}}}.
\end{align*}
To show equality, we consider $V_N:=\Span\{ d_j:\, j=1,\ldots,N\}$ generated by orthonormal vectors $d_j := \frac{1}{\sqrt{2}} ( e_{2j-1} + e_{2j})$. Then, for any even $k  \in \{2,4, \dots, 2N \}$ (and analogous for odd $k$) we get by \eqref{sumorthoproof} that
\begin{align*}
   \sigma_{V_N}(k)^2 = 1 - \sum_{j=1}^N ( d_j)_k^2 =  1 - \Big( \frac{1}{\sqrt{2}} ( e_{k-1} + e_{k}) \Big)_k^2 = 1 - \Big( \frac{1}{\sqrt{2}} \Big)^2 =  \frac{1}{2} ,
\end{align*}
which proves the claim.
\end{proof}

\begin{Rem}
We note that, more general, for $k \in \mathbb{N}$, it holds that $d_N ( \{ e_1, \dots, e_{k N} \} )  =  \ts{\sqrt{\frac{ k-1 }{ k}}}$, which can easily be proven following the above lines.
\end{Rem}

Having these preparations at hand, we can now estimate the Kolmogorov $N$-width for arbitrary orthonormal sets in Hilbert spaces. 
\begin{Lemma} \label{proprealtionsortho}
Let $H$ be an infinite-dimensional Hilbert space and $\{  \tilde{\psi}_1, \dots, \tilde{\psi}_{2 N}  \} \subset H$ any orthonormal set of size $2N$. Then, $d_N (\{  \tilde{\psi}_1, \dots, \tilde{\psi}_{2 N}  \})   = \frac{1}{\sqrt{2}}$. 
\end{Lemma}

\begin{proof}
Since $V_N := \arg\inf\limits_{V_N \in \mathcal{V}_N } \sup\limits_{ w \in \{  \tilde{\psi}_1, \dots, \tilde{\psi}_{2 N}  \} } \inf\limits_{v \in V_N }  \| w - v \|_{H} \subset \text{span} \{ \tilde{\psi}_1, \dots, \tilde{\psi}_{2 N} \}$,
we can consider the subspace $\Span \{ \tilde{\psi}_1, \dots, \tilde{\psi}_{2 N} \} \subset H$ instead of whole $H$.
The space $\text{span} \{ \tilde{\psi}_1, \dots, \tilde{\psi}_{2 N} \} $ with norm $ \| \cdot \|_{H} $ can be isometrically mapped to $\mathbb{R}^{2N} $ with canonical orthonormal basis $\{ e_1, \dots, e_{2N} \}$ and Euclidean norm $\| \cdot \|_2$. In fact, by defining the map $f : \text{span} \{ \tilde{\psi}_1, \dots, \tilde{\psi}_{2 N} \} \to \mathbb{R}^{2N}$  with $f(v) := \sum_{i = 1}^{2 N} ( v,  \tilde{\psi}_i )_{H}  \ e_i$. 
for $v,w \in \text{span} \{ \tilde{\psi}_1, \dots, \tilde{\psi}_{2 N} \}$ we get 
\begin{align*}
	\| f(w) - f(v) \|_2^2 
	&= \Big\| \sum_{i = 1}^{2 N} ( w-v,  \tilde{\psi}_i )_{H} \ e_i \Big\|_2^2 
	= \sum_{i = 1}^{2 N}  ( w-v,  \tilde{\psi}_i )_{H}^2 \|  e_i \|_2^2 
	= \sum_{i = 1}^{2 N}  ( w-v,  \tilde{\psi}_i )_{H}^2 \\
	&= \sum_{i = 1}^{2 N} ( w-v,  \tilde{\psi}_i )_{H}^2 \|  \tilde{\psi}_i \|_{H}^2 
	= \Big\| \sum_{i = 1}^{2 N} ( w-v,  \tilde{\psi}_i )_{H} \ \tilde{\psi}_i \Big\|_{H}^2 
	= \| w - v \|_{H}^2.
\end{align*}
Choosing $w = \tilde{\psi}_k, k \in \{ 1, \dots, {2 N} \}$, we have $f(w) = \sum_{i = 1}^{2 N} ( \tilde{\psi}_k ,  \tilde{\psi}_i )_{H}  e_i = e_k$. Thus, Lemma \ref{lemmakolNeinheie}, yields $d_N (\{  \tilde{\psi}_1, \dots, \tilde{\psi}_{2 N}  \})   =  d_N ( \{ e_1, \dots, e_{2N} \} ) = \frac{1}{\sqrt{2}}$, which proves the claim.
\end{proof}

\begin{Prop} \label{propo}
Let $\mathcal{M}$ be the solution manifold of (2.1a -- d) in \eqref{solmanifoldM} and $\Phi_{M}$, $\Psi_{M}$ defined in (\ref{Psifncset}, \ref{Phifncset}), $M\in\N$. Then, $d_N(\mathcal{M}) \geq  d_N (  \Phi_{M}  )  \geq \frac{1}{2}  d_N( \Psi_{M} )$ for $N \in \mathbb{N}$. 
\end{Prop}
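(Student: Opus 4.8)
The plan is to prove the two inequalities separately. The first, $d_N(\cM) \ge d_N(\Phi_M)$, is nothing but the monotonicity of the Kolmogorov $N$-width under set inclusion: since $\Phi_M \subset \{\varphi_\mu : \mu \in \cD\} \subset \cM$ by \eqref{solmanifoldM} and \eqref{Phifncset}, for every admissible $V_N \in \mathcal{V}_N$ the supremum of the best-approximation error taken over $\Phi_M$ cannot exceed the supremum taken over $\cM$. Since this holds for each $V_N$, taking the infimum over $V_N \in \mathcal{V}_N$ preserves the inequality and yields the claim directly from the definition of $d_N$.

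The heart of the statement is the second inequality $d_N(\Phi_M) \ge \frac{1}{2}\, d_N(\Psi_M)$, and here the decisive structural fact is that each $\psi_{M,m}$ is a difference of two elements of $\Phi_M$, namely $\psi_{M,m} = \varphi_{(m-1)/M} - \varphi_{m/M}$, together with the linearity of the competitor spaces $V_N$. Concretely, I would fix an arbitrary $V_N \in \mathcal{V}_N$ and, for each $m$ and each $\varepsilon>0$, pick near-optimal approximants $v_{m-1}, v_m \in V_N$ of $\varphi_{(m-1)/M}$ and $\varphi_{m/M}$, respectively. Because $V_N$ is a linear space, $v_{m-1} - v_m \in V_N$ is itself an admissible approximant of $\psi_{M,m}$, so the triangle inequality gives $\inf_{v \in V_N}\|\psi_{M,m}-v\|_H \le \|\varphi_{(m-1)/M}-v_{m-1}\|_H + \|\varphi_{m/M}-v_m\|_H$. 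Letting $\varepsilon \to 0$ bounds the best-approximation error of $\psi_{M,m}$ by the \emph{sum} of the best-approximation errors of $\varphi_{(m-1)/M}$ and $\varphi_{m/M}$ against the same $V_N$, each of which is at most $\sup_{\varphi\in\Phi_M}\inf_{v\in V_N}\|\varphi - v\|_H$.

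From the resulting pointwise estimate $\inf_{v\in V_N}\|\psi_{M,m}-v\|_H \le 2\,\sup_{\varphi\in\Phi_M}\inf_{v\in V_N}\|\varphi-v\|_H$, valid for every $m$ with a right-hand side independent of $m$, I would take the supremum over $1 \le m \le M$ and then the infimum over $V_N \in \mathcal{V}_N$. Since the inequality holds for each $V_N$ individually, monotonicity of the infimum yields $d_N(\Psi_M) \le 2\, d_N(\Phi_M)$, which rearranges to the claim. The only point requiring genuine care is the bookkeeping of the nested infima and suprema — in particular that the two approximation errors of $\varphi_{(m-1)/M}$ and $\varphi_{m/M}$ are measured against the \emph{same} subspace $V_N$, which is precisely what makes $v_{m-1}-v_m \in V_N$ legitimate and produces the factor $\frac{1}{2}$. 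No Hilbert-space structure beyond the norm is actually needed for this step; the factor $2$ stems solely from the triangle inequality.
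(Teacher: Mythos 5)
Your proof is correct, and it rests on the same two pillars as the paper's: monotonicity of $d_N$ under set inclusion for the first inequality, and, for the second, the decomposition $\psi_{M,m} = \varphi_{\frac{m-1}{M}} - \varphi_{\frac{m}{M}}$ combined with the linearity of $V_N$ and the triangle inequality. The genuine difference lies in the quantifier bookkeeping, and there your version is the sounder one. The paper's proof introduces $\arg\inf$/$\arg\sup$ objects --- the optimal space $V_N^{\Psi_M}$ for $\Psi_M$, its worst-case index $m^*$, and then, for each of $\varphi_{\frac{m^*-1}{M}}$ and $\varphi_{\frac{m^*}{M}}$, a \emph{separately} optimized space $V_N^{m}$ with best approximant $v^{m}$ --- and its chain opens with the step $\inf_{v \in V_N^{\Psi_M}} \| \psi_{M,m^*} - v \| \le \| \psi_{M,m^*} - ( v^{m^*} - v^{m^*-1} ) \|$. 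That step would require $v^{m^*} - v^{m^*-1} \in V_N^{\Psi_M}$, which is unavailable because $v^{m^*-1}$ and $v^{m^*}$ live in the different spaces $V_N^{m^*-1}$ and $V_N^{m^*}$; in fact, since an $N$-dimensional space that is optimal for a \emph{single} function contains that function, one even has $v^{m} = \varphi_{\frac{m}{M}}$, so the intermediate quantities in the paper's chain degenerate. Your arrangement --- fix one arbitrary $V_N$, approximate both $\varphi$'s by ($\varepsilon$-near-)optimal elements of that \emph{same} space, deduce $\inf_{v \in V_N} \| \psi_{M,m} - v \| \le 2 \sup_{\varphi \in \Phi_M} \inf_{v \in V_N} \| \varphi - v \|$ uniformly in $m$, and only afterwards take the supremum over $m$ and the infimum over $V_N$ --- is precisely what makes the triangle-inequality step legitimate, and your use of near-optimal approximants also avoids the attainment assumptions hidden in the paper's $\arg\inf$ notation. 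Your closing remark that only the normed-space structure, not the inner product, is needed for this proposition is likewise accurate.
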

\begin{proof}
By \eqref{solmanifoldM}, we have $\Phi_{M} =  \{  \varphi_{\frac{m}{M}}  :\,  0 \leq m \leq M  \} \subset \{ \varphi_{\mu} \ \vert  \ \mu \in \mathcal{D} \} \subset \mathcal{M}$, so that the first inequality is immediate. For the proof of the second inequality, we use the abbreviation $\| \cdot \| = \| \cdot \|_{L_2 (\Omega_I)}$. First, we denote some optimizing spaces and functions, $m \in \{ m^*-1, m^* \}$
\begin{align*}
	V_N^{\Psi_M} 
	&:= \arg\inf\limits_{V_N  \in \mathcal{V}_N} \sup \limits_{\psi \in \Psi_M } \inf \limits_{v  \in V_N }  \| \psi - v  \|,  
	&
 	\psi_{M,m^*} 
	&:= \arg \sup \limits_{\psi \in \Psi_M } \inf \limits_{v  \in V_N^{\psi} }  \| \psi - v  \|, \\
	V_N^{m} 
	&:= \arg\inf\limits_{V_N  \in \mathcal{V}_N} \inf \limits_{v  \in V_N }  \| \varphi_{\frac{m}{M}} - v  \|, 
	&
	v^{m }  
	&:= \arg \inf\limits_{v  \in V_N^{m}} \| \varphi_{\frac{m }{M}} - v  \|.
\end{align*}
With those notations, we get
\begin{align*}
    d_N ( \Psi_{M}) 
    &= \inf\limits_{V_N \in \mathcal{V}_N} \sup\limits_{ \psi \in \Psi_{M} } \inf\limits_{ v  \in V_N} \|  \psi - v  \| 
    =  \inf \limits_{v  \in V_N^{\Psi_M}  }  ||\psi_{M,m^*} - v  || \\
    &\kern-20pt\leq  \|  \psi_{M,m^*} - ( v^{m^*} -  v^{m^*-1} ) \| 
    =  \|  (\varphi_{\frac{m^*-1}{M}} - \varphi_{\frac{m^*}{M}}) - ( v^{m^*} -  v^{m^*-1} ) \|  
    \\
    &\kern-20pt\leq \| \varphi_{\frac{m^*-1}{M}} - v^{m^*-1}  \| +  \| \varphi_{\frac{m^* }{M}} - v^{m^* }  \|  
    =   \inf\limits_{v \in V_N^{m^*-1} } \| \varphi_{\frac{m^*-1}{M}} - v \| 
    		+  \inf\limits_{v \in V_N^{m^*} } \| \varphi_{\frac{m^* }{M}} - v \|   \\
    &\kern-20pt= \inf\limits_{V_N  \in \mathcal{V}_N} \inf\limits_{v \in V_N } \| \varphi_{\frac{m^*-1}{M}} - v \| 
    	+  \inf\limits_{V_N  \in \mathcal{V}_N} \inf\limits_{v \in V_N } \| \varphi_{\frac{m^* }{M}} - v \|  
     \leq \inf\limits_{v \in W_N} \| \varphi_{\frac{m^*-1}{M}} - v \| 
    	+  \inf\limits_{v \in W_N} \| \varphi_{\frac{m^* }{M}} - v \|,
\end{align*}
where $W_N :=  \arg\inf\limits_{V_N \in \mathcal{V}_N} \big( \inf\limits_{v \in V_N} \| \varphi_{\frac{m^*-1}{M}} - v \| + \inf\limits_{v \in V_N} \| \varphi_{\frac{m^* }{M}} - v \| \big) $. This gives
    \begin{align*}
    \inf\limits_{v \in W_N} \| \varphi_{\frac{m^*-1}{M}} - v \| +  \inf\limits_{v \in W_N} \| \varphi_{\frac{m^* }{M}} - v \| 
    & = \inf\limits_{V_N \in \mathcal{V}_N} \big( \inf\limits_{v \in V_N} \| \varphi_{\frac{m^*-1}{M}} - v \| + \inf\limits_{v \in V_N} \| \varphi_{\frac{m^* }{M}} - v \| \big)  \\
    & \leq   \inf\limits_{V_N \in \mathcal{V}_N} \big( 2 \sup\limits_{\varphi  \in  \Phi_{M} }  \inf\limits_{v \in V_N} \| \varphi  - v \| \big)  = 2 \cdot d_N ( \Phi_{M}),
    \end{align*}
which proves the second inequality.
\end{proof}

We can now prove the main result of this note.

\begin{Th} \label{maintheoremabschatzung}
For $\mathcal{M}$ being defined as in \eqref{solmanifoldM}, we have that $d_N(\mathcal{M}) \geq \frac{1}{4}\, N^{-1/2}$.
\end{Th}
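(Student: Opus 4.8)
The plan is to deduce the bound directly from the two structural results already established, namely Proposition~\ref{propo} and Lemma~\ref{proprealtionsortho}, by making the single right choice of the parameter $M$ in terms of $N$ and invoking the elementary scaling behaviour of the Kolmogorov width. The only genuinely new ingredient needed is the observation that $d_N$ is positively homogeneous: for any $c>0$ and any subset $S\subset H$ one has $d_N(cS)=c\,d_N(S)$. This follows immediately from the definition \eqref{defKolNwidth}, since every competing space $V_N\in\cV_N$ is a linear subspace and hence satisfies $cV_N=V_N$, so that the inner distance $\inf_{v\in V_N}\|cw-v\|$ equals $c\,\inf_{v'\in V_N}\|w-v'\|$ for each $w\in S$.

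First I would specialize to $M=2N$. By \eqref{psisortho}--\eqref{orthonormalf} the family $\Psi_{2N}=\{\psi_{2N,m}:1\le m\le 2N\}$ then consists of exactly $2N$ pairwise orthogonal functions in $L_2(\Omega_I)$, each of norm $\sqrt{1/(2N)}$, and the rescaled family $\tilde\Psi_{2N}=\{\tilde\psi_{2N,m}\}$ with $\tilde\psi_{2N,m}=\sqrt{2N}\,\psi_{2N,m}$ is orthonormal. In particular $\tilde\Psi_{2N}=\sqrt{2N}\,\Psi_{2N}$, so homogeneity yields
\begin{align*}
  d_N(\Psi_{2N}) = (2N)^{-1/2}\, d_N(\tilde\Psi_{2N}).
\end{align*}

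Next I would apply Lemma~\ref{proprealtionsortho}: since $\tilde\Psi_{2N}$ is an orthonormal set of size $2N$ inside the infinite-dimensional Hilbert space $H:=L_2(\Omega_I)$, that lemma gives $d_N(\tilde\Psi_{2N})=\tfrac{1}{\sqrt2}$. Combining the two displays produces the clean value $d_N(\Psi_{2N})=(2N)^{-1/2}\cdot 2^{-1/2}=\ts{\frac12}\,N^{-1/2}$. Finally, Proposition~\ref{propo} with $M=2N$ chains everything together via $d_N(\mathcal{M})\ge d_N(\Phi_{2N})\ge\ts{\frac12}\,d_N(\Psi_{2N})=\ts{\frac14}\,N^{-1/2}$, which is the asserted estimate.

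I do not expect a serious obstacle here, as the argument is essentially a bookkeeping assembly of prior results; the two points warranting care are verifying the positive homogeneity of $d_N$ from the definition, and checking that the choice $M=2N$ is what makes the size of $\Psi_{2N}$ match the hypothesis of Lemma~\ref{proprealtionsortho} exactly (size $2N$ for the $N$-width), so that no loss is incurred beyond the factor $\tfrac12$ coming from Proposition~\ref{propo}. Since the inequality is claimed for every fixed $N\in\N$, the choice $M=2N$ is legitimate and no limit in $M$ is required.
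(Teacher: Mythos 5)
Your proposal is correct and follows essentially the same route as the paper's own proof: choose $M=2N$, use the linearity of the competing subspaces (your homogeneity observation is exactly the paper's ``since $V_N$ is a linear space'' step) to reduce $d_N(\Psi_{2N})$ to $d_N(\tilde\Psi_{2N})$, apply Lemma~\ref{proprealtionsortho}, and chain through Proposition~\ref{propo}. The arithmetic $\frac12\cdot(2N)^{-1/2}\cdot\frac{1}{\sqrt2}=\frac14 N^{-1/2}$ matches the paper exactly.
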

\begin{proof}
Using Proposition \ref{propo} with $M = 2N$ (which in fact maximizes $d_N(\Psi_{M})$) yields $d_N(\mathcal{M}) \geq  d_N ( \Phi_{2N}  )  \geq \frac{1}{2} \cdot d_N(  \Psi_{2N} )$.
Since $V_N$ is a linear space, we have 
\begin{align*}
     d_N(  \Psi_{2N} ) 
     = d_N(\{  \psi_{2N,n} :\,  1 \leq n \leq 2N \}) 
     = \ts{\frac{1}{\sqrt{2N}}}\, d_N(\{  \sqrt{2N} \psi_{2N,n}  :\,  1 \leq n \leq 2N \}) 
     = \ts{\frac{1}{\sqrt{2N}}}\, d_N( \tilde{\Psi}_{2N} ).
\end{align*}
Applying now Lemma \ref{proprealtionsortho} for the orthonormal functions previously defined in \eqref{orthonormalf} gives 
$ \frac{1}{2}\, d_N(  \Psi_{2N} ) 
=  \frac{1}{2}\,  \frac{1}{\sqrt{2N}}  d_N(  \tilde{\Psi}_{2N} ) 
=   \frac{1}{2}  \frac{1}{\sqrt{2N}} \cdot \frac{1}{\sqrt{2}} 
= \frac{1}{{4} } N^{-1/2}$, 
which completes the proof.
\end{proof}

Theorem \ref{maintheoremabschatzung} shows the same decay of  $d_N (\mathcal{M} )$ as for linear advection problems, \cite{OR16}. Thus, transport and hyperbolic parametrized problems are expected to admit a significantly slower decay as for certain elliptic and parabolic problems as mentioned in the introduction. We note, that this result is \emph{not} limited to the specific discontinuous initial conditions \eqref{simple_wave_ini1}. In fact, also for continuous initial conditions with a smooth `jump', one can construct similar orthogonal functions like \eqref{psisortho} yielding the slow decay result.

\bibliographystyle{abbrv}
\bibliography{GU-WaveKolm_references.bib}

\end{document}